\newcounter{rmk}
\setlist{nolistsep}
\newtheoremstyle{plain}{3mm}{3mm}{\slshape}{}{\bfseries}{.}{.5em}{}
\newtheoremstyle{definition}{2mm}{2mm}{}{}{\bfseries}{.}{.5em}{}
\theoremstyle{plain}
\newtheorem{theorem}{Theorem}
\newtheorem{corollary}[theorem]{Corollary}
\theoremstyle{definition}
\newtheorem{remark}[rmk]{Remark}
\theoremstyle{plain}
\newtheorem*{namedthm}{\namedthmname}
\newcounter{namedthm}
\newcommand{\R}{\mathbb{R}}
\newcommand{\eps}{\epsilon}
\newcommand{\B}{\mathcal{B}}
\newcommand{\G}{\mathcal{G}}
\title{A counterexample for pointwise upper bounds on Green's function with a singular drift at boundary.}
\author{Aritro Pathak}
\date{}
\begin{document}

\maketitle
\begin{abstract}

    We show an example of a sequence of elliptic operators in the unit ball with drifts that diverge as the inverse distance to the boundary, for which we don't get uniform upper estimates for the Green's function with the pole at the origin. Such drifts have been considered in the literature in the study of the $L^{p}$ Dirichlet problem for both the parabolic and elliptic operators. Our construction provides a counterexample to an earlier claim of Hofmann–Lewis.
\end{abstract}

\section{Introduction.}

Consider the unit ball of radius $1$ with center at origin, $\Omega=B(0,1)\subset \R^n$, $n\geq 3$, and the sequence of operators, for $m\geq 100$\footnote{The number 100 is chosen large enough for convenience so that the argument of \cref{thm1} remains unaltered for the special case of $\G_m(x,0)$ in the ball $B(0,1)$, and the test functions considered in that proof are disjoint from the region $|r|>1- \frac{1}{m}$, for all $m\geq 100$.},
\begin{align}\label{one}
L_m u=-\nabla\cdot(\nabla u)+\B_m \cdot \nabla u
\end{align}    
where for each $m$,  $\B_m$ is the drift which is of the form,
\begin{align}\label{two}  \B_m=\begin{cases}
    -\frac{C}{1-|r|}\hat{r}& |r|\leq 1-\frac{1}{m} \\
    -Cm\hat{r}& 1-\frac{1}{m}<|r|<1, 
\end{cases}\end{align}
for any arbitrary constant $C\geq 1$. Here, we use the notation that $\hat{r}$ is the unit vector at the point $\hat{r}=\frac{\vec{r}}{|\vec{r}|}$, with $|\vec{r}|$ the magnitude of the vector $\vec{r}$.

As $m\to \infty$, this approximates a drift $\B(x)$ that behaves like the inverse distance to the boundary, that has been considered in more general domains $\Omega$, in \cite{HL01,KP01};
\begin{align}\label{imp2}
     \delta(x) |\B(x)| <C.
\end{align}
Here, $\delta(x)$ denotes the distance of the point $X$ to the boundary.

One is referred to recent works on the existence of the Green's function in the distributional sense, pointwise bounds on Dirichlet Green's functions in certain settings, and some scale invariant regularity estimates of solutions in the setting of Lorenz spaces and the Kato spaces,  in \cite{KS19,Sak21a,Sak21b,Mou23}.

The following existence and uniqueness (up to a set of measure zero) result for the Green's function is well known, where the drift is bounded, in bounded domains. See for example, Section 5 of  \cite{KS19}. or more generally Theorem 6.1 of \cite{Mou23}.

\begin{theorem}\label{thm1}
    For any bounded domain $\Omega\subset \R^{n}$, there exists a unique non-negative function $\G_m:\Omega\times \Omega\to \R\cup \{\infty\}$, called the Green's function associated with $L_m$, such that the following holds:
    \begin{enumerate}
    \item $G_m(\cdot,y)\in\mathcal{C}(\Omega\setminus B(y,s))\cap W^{1,2}(\Omega\setminus B(y,s))\cap W^{1,1}(\Omega)\ \ \ \forall y\in \Omega, \forall s>0$.
    \item $\int_{\Omega} \Big(\nabla_{x}\G_m(x,y)\cdot \nabla \phi(x)+\B_m \cdot(\nabla_{x}\G_m(x,y))\phi(x)\Big) dx =\phi(y)$ for all $\phi\in C_{c}^{\infty}(\Omega).$
\end{enumerate}
\end{theorem}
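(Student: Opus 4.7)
The plan is to mirror the classical construction of the Green's function by approximating the Dirac mass at $y$ with $L^{2}$ sources and passing to the limit, exploiting the fact that for each fixed $m$ the drift $\B_m$ is bounded by $Cm$, so standard elliptic theory with bounded lower order coefficients applies throughout.

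First I would fix $y\in\Omega$ and for each small $\rho>0$ introduce the mollified source $f_{\rho}^{y}=|B(y,\rho)|^{-1}\chi_{B(y,\rho)}$ and solve the auxiliary Dirichlet problem $L_m u = -f_{\rho}^{y}$ in $\Omega$, $u=0$ on $\partial\Omega$. Since $\B_m\in L^{\infty}(\Omega)$, the bilinear form
\[
a(u,v)=\int_{\Omega}\nabla u\cdot\nabla v\,dx+\int_{\Omega}(\B_m\cdot\nabla u)\,v\,dx
\]
is continuous on $W^{1,2}_{0}(\Omega)\times W^{1,2}_{0}(\Omega)$, and uniqueness for the homogeneous problem is guaranteed by the weak maximum principle for operators with bounded drift (Gilbarg--Trudinger, Ch.~8). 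The Fredholm alternative then produces a unique weak solution $G_{\rho}^{y}\in W^{1,2}_{0}(\Omega)$, which is nonnegative by the maximum principle since $f_{\rho}^{y}\ge 0$.

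Next I would gather uniform-in-$\rho$ estimates of three types: an $L^{\infty}$ bound for $G_{\rho}^{y}$ on $\Omega\setminus B(y,r)$ for any $r>2\rho$, a $W^{1,2}$ bound on $\Omega\setminus B(y,r)$, and a global $W^{1,q}$ bound on $\Omega$ for $q<\tfrac{n}{n-1}$. The first two come from De Giorgi--Nash--Moser theory for linear equations with bounded lower order terms, applied to the fact that $L_m G_{\rho}^{y}=0$ away from $\supp f_{\rho}^{y}$. The global integrability estimate follows from the Gr\"uter--Widman type argument: test the equation against suitable truncations $\tau_{k}(G_{\rho}^{y})$ and use the $L^{\infty}$ bound on $\B_m$ to absorb the drift contribution into the Dirichlet energy. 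A diagonal compactness argument then yields a subsequence $G_{\rho_{j}}^{y}\to G_{m}(\cdot,y)$ strongly in $W^{1,p}_{\mathrm{loc}}(\Omega)$ for $p<\tfrac{n}{n-1}$, weakly in $W^{1,2}_{\mathrm{loc}}(\Omega\setminus\{y\})$, and locally uniformly on $\Omega\setminus\{y\}$; the limit inherits nonnegativity, regularity statement (1), and upon testing against $\phi\in C_{c}^{\infty}(\Omega)$ and using that $\fint_{B(y,\rho_{j})}\phi\to\phi(y)$, also the reproducing identity (2).

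Finally, uniqueness follows by taking the difference of two candidate Green's functions: the difference extends to a weak solution of $L_m u=0$ on all of $\Omega$ with zero trace on $\partial\Omega$, so the maximum principle forces it to vanish. The main obstacle I anticipate is the global $W^{1,1}(\Omega')$ bound, since the standard Gr\"uter--Widman truncation method was designed for symmetric divergence-form operators; the correction term $\int(\B_m\cdot\nabla G_{\rho}^{y})\,\tau_{k}(G_{\rho}^{y})$ must be handled carefully, either by integration by parts shifting the derivative onto $\tau_{k}\circ G_{\rho}^{y}$ (exploiting $|\tau_{k}|\le k$) and then using Young's inequality with a small parameter to absorb $\|\nabla G_{\rho}^{y}\|_{L^{2}}^{2}$ into the ellipticity term, or by invoking the companion construction for the adjoint operator $L_m^{*}u=\Delta u-\nabla\cdot(\B_m u)$ and using duality. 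Once this is in place, everything else is routine bookkeeping.
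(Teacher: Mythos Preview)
The paper does not prove this statement; it is stated as a known result with a citation to \cite{KimSa} (Kim--Sakellaris), preceded by the sentence ``The following is well known for the case of bounded domains, and where the drift is bounded.'' So there is no proof in the paper to compare against.

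Your sketch is the standard Gr\"uter--Widman construction adapted to operators with bounded lower order coefficients, which is indeed how this is done in the cited reference and elsewhere. The outline is sound: mollify the pole, solve the approximate problems via Fredholm (using the weak maximum principle to rule out a nontrivial kernel), extract the three tiers of uniform estimates, and pass to the limit. Your identification of the drift term in the truncation estimate as the only nonroutine step is accurate, and both remedies you list (absorb via Young after bounding $|\tau_k|\le k$, or argue by duality with the adjoint Green's function) are the ones actually used in the literature. One small refinement: to get existence of $G_\rho^y$ you will in general need a G\aa rding shift (solve $(L_m+\lambda)u=-f$ for large $\lambda$ first) before invoking Fredholm, since the bilinear form with a large bounded drift need not be coercive on $W^{1,2}_0(\Omega)$ without the shift; you implicitly rely on this when you appeal to the Fredholm alternative plus maximum-principle uniqueness.
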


For the case of an operator with only the elliptic principal term and no lower order terms in a domain $\Omega$, pointwise upper and lower estimates for the Green's function with the pole at $x\in \Omega$ in the interior region $B(x,\delta(x)/2)=\{y\in \Omega||x-y|\leq \delta(x)/2\}$  were obtained in \cite{GW82}. Precisely, they showed the existence of constants $C_1,C_2$ uniform on the domain, so that for any $y\in $ $B(x,\delta(x)/2)$, we have 
\begin{align}\label{pointwise}
    \frac{C_1}{|x-y|^{n-2}}\leq \G(x,y) \leq  \frac{C_2}{|x-y|^{n-2}}.
\end{align}

For the case of a drift that is bounded from above by $\frac{\eps}{\delta(X)}$ where $\eps$ is sufficiently small, we also show pointwise upper and lower bounds on the Green's function in the \cite{Pat25a}. In fact, the argument for the lower bound in this paper is essentially the same as in \cite{Pat25a}. It will be apparent in Section 3 of this paper, that when we choose $C$ in \cref{two} small enough, the counterexample fails.

We show the following in Section 3:
\begin{theorem}
    For the operator considered in \cref{two}, for any $m\geq 100$, with the drift considered in \cref{two}, with $C=1$, the Dirichlet Green's function, defined in the distributional sense, $\G_m(x,0)$, when evaluated at the point $(\frac{1}{2},0,\dots,0)$  diverges to infinity as $m\to \infty$ .
\end{theorem}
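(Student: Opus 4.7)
The plan is to use rotational symmetry to reduce the PDE to a one-dimensional radial ODE, solve it explicitly on the interior region, and observe that the resulting formula for $\G_m(1/2, 0)$ contains a logarithmically divergent integral when $C = 1$.

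Since the operator $L_m$, the ball $B(0,1)$, and the pole at the origin are all rotation-invariant, the uniqueness part of Theorem 1 forces $\G_m(x, 0) = v_m(|x|)$ for a radial profile $v_m : (0, 1] \to [0, \infty)$ with $v_m(1) = 0$. A standard reduction of the weak formulation against radial test functions shows that on the interior region $v_m$ satisfies
\begin{equation*}
v_m''(r) + \frac{n-1}{r}\, v_m'(r) - \frac{C}{1-r}\, v_m'(r) = 0, \qquad r \in (0, 1 - \tfrac{1}{m}),
\end{equation*}
with the analogous equation on $(1-\tfrac{1}{m}, 1)$ obtained by replacing $\tfrac{C}{1-r}$ by $Cm$; the two formulas for the drift coincide at $r = 1 - \tfrac{1}{m}$, so $v_m'$ is continuous there. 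The distributional $\delta$-source at the origin fixes the asymptotic $\lim_{r \to 0^+} r^{n-1} v_m'(r) = -\frac{1}{\omega_{n-1}}$, where $\omega_{n-1}$ is the area of $S^{n-1}$.

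Setting $w := v_m'$, the interior ODE is first-order linear with integrating factor $r^{n-1}(1-r)^C$; integrating once and pinning down the constant by the singularity at the origin yields
\begin{equation*}
v_m'(r) = -\frac{1}{\omega_{n-1}}\, r^{-(n-1)}(1-r)^{-C}, \qquad r \in (0, 1 - \tfrac{1}{m}).
\end{equation*}
Integrating from $r = 1/2$ to $r = 1 - \tfrac{1}{m}$ and using $v_m(1 - \tfrac{1}{m}) \geq 0$ (a consequence of $\G_m \geq 0$ from Theorem 1),
\begin{equation*}
\G_m(1/2, 0) = v_m(1/2) \;\geq\; \frac{1}{\omega_{n-1}} \int_{1/2}^{1 - 1/m} s^{-(n-1)}(1-s)^{-C} \, ds.
\end{equation*}
Specializing to $C = 1$ and bounding $s^{-(n-1)} \geq 1$ on $[1/2, 1]$, the right-hand side is at least $\frac{1}{\omega_{n-1}}(\ln m - \ln 2)$, which diverges as $m \to \infty$. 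Observe also that for any $C < 1$ both the analogous interior integral and the contribution from the boundary layer stay bounded in $m$, in line with the paper's remark that the counterexample fails when $C$ is small.

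The main technical obstacle is the passage from the distributional definition of $\G_m$ in Theorem 1 to the pointwise ODE analysis: one must verify that the piecewise radial candidate constructed above lies in the weak class required there, satisfies the weak equation globally (with correct matching at $r = 1 - 1/m$ and correct singular behavior at the pole), and is therefore identified with $\G_m(\cdot, 0)$ by uniqueness. Since the drift is bounded (by $Cm$) for each fixed $m$, this reduces to classical $W^{1,2}_{\mathrm{loc}}$ theory for elliptic operators with bounded drift; alternatively, one can run the argument directly inside the weak formulation by inserting radial test functions supported in $(1/2, 1 - 1/m)$, which is the approach suggested by the footnote in the paper.
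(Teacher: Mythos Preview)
Your argument is correct and follows the same overall architecture as the paper's proof: reduce to the radial ODE, solve it by an integrating factor to get $r^{n-1}(1-r)^{C}v_m'(r)=\text{const}$ on $(0,1-\tfrac1m)$, and then observe that the resulting integral $\int_{1/2}^{1-1/m} r^{-(n-1)}(1-r)^{-1}\,dr$ diverges logarithmically.

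The one genuine difference is how you pin down the constant. The paper does \emph{not} compute the pole asymptotic directly; instead it invokes the Gr\"uter--Widman type lower bound (its Theorem~\ref{thm1}) to produce, for each $m$, a point $a_{k,m}\in(0,\tfrac14)$ with $|a_{k,m}^{\,n-1}\G_m'(a_{k,m})|\geq C_0$ for a constant $C_0$ independent of $m$, and then uses this $a_{k,m}$ as the base point in the relation $\G_m'(r)=a_{k,m}^{\,n-1}r^{-(n-1)}\G_m'(a_{k,m})\big(\tfrac{1-a_{k,m}}{1-r}\big)^{C}$. You instead read off $\lim_{r\to 0^{+}}r^{n-1}v_m'(r)=-1/\omega_{n-1}$ straight from the distributional $\delta$-source, which gives the exact constant rather than a uniform lower bound and makes the proof of Theorem~2 independent of the lower-bound machinery in Section~2. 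Both routes are valid; yours is shorter and more self-contained, while the paper's has the (minor) advantage that it only needs a qualitative lower bound on the Green's function rather than the precise leading-order singularity. Your closing paragraph correctly flags the remaining technical step (identifying the explicit radial candidate with the weak Green's function), which the paper also handles only informally.
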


We note that the claimed proof in Theorem 4.3(a) in Chapter III of \cite{HL01} on the upper bound on the Green's function for the elliptic operator in the half plane in $\R^{n}$, with $n\geq 3$, with a singular drift term bounded by the inverse distance to the boundary, does not work as intended, as the purported elliptic version of Lemma 2.10 in Chapter III of \cite{HL01} can't be proved, since in turn it relies on the upper bound on the Green's function for the parabolic equation with the drift term which is proved in Chapter I of \cite{HL01} and which does not generalize to the elliptic case. 

In another direction, when we consider a drift inside the unit ball, that is slightly less singular at the boundary, and which diverge as $1/(1-|r|)^{1-\beta}$ for some $0<\beta<1$, then solutions exist and one gets expected sharp pointwise estimates on the Green's function. See for example, \cite{Pat25b, Ha24}. 

We first show that lower bounds can be obtained for the Green's function, uniform in $m$, for the Green's function in the domain $B(0,\frac{1}{2})$. 

The Green's function $\G_m(x,0)$ in this special case is dependent only on the radial variable, and thus $\G_m(x,0)=\G_m(|x|,0)$. 

\begin{theorem}
    The Green's function as defined, $\G(x,0)$ is dependent only on the radial variable, and thus we have, $\G(x,0)=\G(|x|,0)$ for any $x\in B(0,1)\setminus\{0\}$.
\end{theorem}
\begin{proof}
    We have the equation 
    \begin{align}\label{eqq}
    L_m u_{m,\rho} =\frac{1}{|B(0,\rho)|}1_{B(0,\rho)},
    \end{align}
    for any $\rho<\frac{1}{2}$, where we write $x=(r,\theta_1,\dots,\theta_{n-1})$. For convenience, we suppress the dependence on $m$ in the notation. Fix any $\alpha\in \mathbb{S}^{n-1}$, and the co-dimension one hyperplane $H_\alpha$ perpendicular to $\alpha$ and passing through the origin. One can orient the axes to define the variable $\theta_{n-1}$ as the azimuthal angle on $H_\alpha$. 

    We write, $x'(x)=(r',\theta'_1,\dots,\theta'_{n-1})=(r,\theta_1,\dots,\theta_{n-1}+\beta)$.
    
    Then consider the function $u_{\rho,\beta}(x)=u_{\rho}(x')=u_\rho(r,\theta_1,\dots, \theta_{n-1}+\beta)$, for some fixed $0\leq\beta<2\pi$. For now, we suppress the dependence of $u_{\rho,\beta}$ on $\alpha$ for notational convenience. We note that this is a well defined function in the unit ball, and we show that this also satisfies \cref{eqq}.  

    We have, 
    \begin{align}
        \frac{\partial }{\partial r}(u_{\rho,\beta}(x))=\frac{\partial }{\partial r}(u_{\rho}(x'))= \frac
       {\partial u_{\rho}}{\partial r'}(x')\frac{\partial r'}{\partial r}+\sum\limits_{i=1}^{n-1}\frac{\partial u_{\rho}}{\partial \theta'_i}(x')\frac{\partial \theta'_i}{\partial r}=\frac{\partial }{\partial r'}(u_{\rho}(x')).
    \end{align}
   and we have $\partial r'/\partial r =1$ and for each $i\in \{ 1,\dots, n-1\}$, that $\partial \theta'_i /\partial r=0$.

   By a similar argument, we also have for any $1\leq i\leq (n-1)$, 
   \begin{align}
        \frac{\partial }{\partial \theta_i}(u_{\rho,\beta}(x))=\frac{\partial }{\partial \theta'_i}(u_{\rho}(x')).
   \end{align}
    
    Using the structure of the Laplacian we get, noting also that $\B_m(x), \B_m(x')$ both have only a radially inward pointing component at each of these points, of equal magnitude,
    \begin{multline}
        \nabla\cdot(\nabla u_{\rho,\beta}(x)) +\B_{m}(x)\cdot \nabla u_{\rho,\beta}(x)= \nabla'\cdot(\nabla' u_{\rho}(x') +\B_{m}(x')\cdot \nabla' u_{\rho}(x') \\= \frac{1}{|B(0,\rho)|}1_{B(0,\rho)}(x')= \frac{1}{|B(0,\rho)|}1_{B(0,\rho)}(x),
    \end{multline}
where the last equality follows from the radial symmetry of the function, and we write, $\nabla':=\big(\frac{\partial}{\partial r'},\frac{\partial}{\partial \theta'_1},\dots, \frac{\partial}{\partial \theta'_{n-1}}\big)$.
    
    This gives us a family of solutions $u_{\rho,\beta}$ for $0\leq \beta< 2\pi$. By uniqueness of the solutions, we conclude that all these functions are identical. In particular this forces that for any $r\leq 1$, the value of $u_{\rho,\beta}$ on $S(0,r)\cap H_\alpha$, where $S(0,r)$ is the sphere of radius $r$ centered at the origin, is constant.

    Now, consider the family of unit vectors $\alpha\in \mathbb{S}^{n-1}$, and repeat this argument for any arbitrary $\alpha$, rotating the coordinates to have $\theta_{n-1}$ to be the azimuthal angle on $H_{\alpha}$. 

    We thus have a family of solutions, all of which coincide., and by repeating the previous argument, this forces $u_\rho(x)=u_{\rho}(|x|)$.

    Now by a standard limiting argument, see, for example Section 6 of \cite{Mou23}, we eventually get pointwise convergence almost everywhere of $u_{m,\rho_i}\to u_m$ in $B(0,1)\setminus \{0\}$ along a subsequence $\rho_i \to 0$. We are thus forced to conclude that $u_m(x)=u_m(|x|)$.

\end{proof}

\section{Lower bounds on the Green's function.}

The lower bound follows by an argument similar to that used in \cite{GW82}.\footnote{This was also outlined in the argument of Lemma 4.3 in Section III in \cite{HL01}.}

  We consider the drift more generally considered in \cref{imp2}, and the argument remains the same in the special case of the Green's function $\G(x,0)$ for the operator in \cref{one}, for the drift considered in \cref{two}. In particular in this case, the constant $c_0$ in \cref{thm1} below is independent of the level of truncation $m$ in the drift of \cref{two}.

We remark that the Harnack inequality,  used routinely in the proof of \cref{thm1} below, can be used with constants independent of the truncation level $m$, and can also be used for the limiting drift of \cref{imp2}, as long as the balls considered stay uniformly bounded away from the boundary. That is the case in the setting of \cref{thm1}. One notes the form of the Harnack constant in \cite{GT01} in Chapter 8 for example, which shows that for any ball $B(x,\delta(x))\eta)$, for uniform $\eta$, the Harnack constant is bounded from above by $ C_{0}^{\eta}$. As we get arbitrarily close to the boundary, as long as we consider balls bounded away from the boundary in this manner, the Harnack inequality can thus be used routinely.


\begin{theorem}\label{thm1}
    For the elliptic operator with the coefficients satisfying \cref{two} in $B(0,1)\subset \R^n$, we have the lower bound for the Green's function for the operator in \cref{one}: for any $z,y\in B(0,1)$ with $|z-y|\leq \frac{1}{2}\delta(y):=\frac{1}{2}\text{dist}(y,\partial B(0,1))$: 
    \begin{equation}\label{lower}
        \G(y,z)\geq c_0\frac{1}{|z-y|^{n-2}}.
    \end{equation}
\end{theorem}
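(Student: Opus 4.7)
The plan is to follow the classical Grüter--Widman scheme, localized to the ball $B(y, R)$ with $R := \delta(y)/2$. Since $r := |z - y| \leq R$, the entire analysis takes place inside $B(y, R) \subset \Omega$, on which the hypothesis $\delta(x)|\B(x)| \leq M$ gives $|\B(x)| \leq 2M/\delta(y) = M/R$, so that $L$ has bounded drift on this ball; in fact the scale-invariant quantity $|x-y|\,|\B(x)|$ is bounded by $M$ throughout. Harnack's inequality for the positive $L$-solution $\G(\cdot, y)$ therefore holds on every concentric annulus $\{\alpha \rho \leq |x-y| \leq \beta \rho\} \subset B(y, R)$ with constants depending only on $n$, $M$, and $\beta/\alpha$, uniformly in $y$ and in the truncation level, as the author's remark emphasizes.

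The first main step is a near-pole estimate: I would show that $m(\rho) := \inf_{|x-y|=\rho} \G(x, y) \gtrsim \rho^{2-n}$ for $\rho$ small compared to $R$. Testing the weak equation for $\G$ against a radial cutoff $\phi_\rho \in C_c^\infty(B(y, 2\rho))$ with $\phi_\rho \equiv 1$ on $B(y, \rho)$ and $|\nabla \phi_\rho| \lesssim \rho^{-1}$ gives
\[
1 \;=\; \int \nabla_x \G \cdot \nabla \phi_\rho \, dx \;+\; \int \B \cdot \nabla_x \G \, \phi_\rho \, dx.
\]
The interior pointwise gradient estimate $|\nabla_x \G(x, y)| \lesssim |x-y|^{1-n}$ available for operators with bounded drift (see \cite{KimSa, Sake}), combined with $|\B| \leq M/R$, controls the drift term by a constant times $\rho/R$, which is less than $1/2$ once $\rho \leq \rho_*(n, M) \cdot R$. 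The gradient-pairing term, supported on the annulus $\{\rho \leq |x-y| \leq 2\rho\}$, is bounded via Caccioppoli and Harnack by $C\, m(\rho)\, \rho^{n-2}$. Combining the two estimates yields $m(\rho) \gtrsim \rho^{2-n}$ on this small-scale regime.

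The second step extends this bound to all $r \in (0, R/2]$ via a Harnack chain: starting from $\rho = \rho_* R$, doubling the radius at most $\log_2(1/(2\rho_*))$ times propagates the bound with only a uniform multiplicative loss at each step, yielding $m(r) \geq K/r^{n-2}$ with $K = K(n, M)$ for all $r \leq R/2$. A final application of Harnack on the annulus $\{r/2 \leq |x-y| \leq 2r\}$ converts this sphere-infimum bound into the pointwise estimate $\G(z, y) \geq K/r^{n-2}$.

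The main technical ingredient is the interior pointwise gradient estimate on $\G$, which is precisely what decouples the flux identity in the first step from the Green's function's size at smaller scales and avoids circularity. Once this input is in place, the Grüter--Widman-style argument produces a constant $K$ depending only on $n$ and $M$, and in particular independent of the truncation parameter $m$ in \cref{two}, which is the uniformity asserted by the theorem.
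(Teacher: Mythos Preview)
Your overall strategy follows Gr\"uter--Widman, as does the paper's, but there is a genuine gap in how you handle the drift term in the flux identity. You write
\[
1 \;=\; \int \nabla_x \G \cdot \nabla \phi_\rho \, dx \;+\; \int \B \cdot \nabla_x \G \, \phi_\rho \, dx,
\]
and then control the second integral---whose support contains the pole---by invoking a pointwise bound $|\nabla_x \G(x,y)| \lesssim |x-y|^{1-n}$ from \cite{KimSa,Sake}. Those references, however, obtain such gradient bounds from the \emph{upper} estimate $\G(x,y)\lesssim |x-y|^{2-n}$ together with interior gradient estimates, and the constants depend on norms of $\B$ that blow up with the truncation level $m$. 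The whole point of the paper (Theorem~2) is that the upper bound on $\G_m$ fails uniformly in $m$; so the ingredient you single out as ``precisely what decouples the flux identity from the Green's function's size at smaller scales'' is not available with an $m$-independent constant, and your argument as written does not produce a uniform $K$. (In the radial example of \cref{two} the gradient bound happens to hold, essentially because the correction $\G_m-\G_{B(0,1/2),m}$ is constant by symmetry; but nothing in your argument establishes it in the generality of the theorem.)

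The paper sidesteps this issue entirely by testing against the \emph{adjoint} form of the defining equation, \cref{eqimpp}, in which the drift contribution is $-\G\,\B\cdot\nabla\phi$ rather than $(\B\cdot\nabla\G)\,\phi$. With $\phi\equiv 1$ on $B_{r/2}(y)$ and $\phi\equiv 0$ outside $B_r(y)$, \emph{both} terms are then supported in the annulus $\{r/2\le|x-y|\le r\}$, well away from the pole. A Caccioppoli estimate on that annulus (obtained by testing \cref{eqimpp} with $\G$ times an annular cutoff), combined with Harnack's inequality---constants depending only on $n$ and $M$---gives directly
\[
1 \;\le\; K\,r^{n-2}\sup_{r/4\le|x-y|\le 3r/2}\G(y,x) \;\le\; K\,|z-y|^{n-2}\,\G(y,z)
\]
at the scale $r=|z-y|$. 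No near-pole gradient bound, no preliminary small-scale regime $\rho\le\rho_*R$, and no subsequent Harnack chain are needed.
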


\begin{proof}[Proof of \cref{thm1}]

    The proof essentially follows by extending the argument of the proof of Eq.(1.9) of \cite{GW82}. Take $r:=|z-y|$. Consider a smooth cut-off function $\eta$ which is $1$ on $B(y,r)\setminus B(y,r/2)$ and zero outside $B(y, 3r/2)\setminus B(y,r/4)$, and further $0\leq \eta\leq 1$ and $|\nabla \eta| \leq \frac{K}{r}$.

    Henceforth, we use the Einstein summation convention, where the summation sign is implied.

    Given the domain $B(0,1)$, for any admissible test function $\phi$, the Green's function satisfies the following adjoint equation;
    \begin{equation}\label{eqimpp}
        \int_{B(0,1)} \Big((\nabla \phi) \cdot\nabla \G(y,x) +\G(y,x) \B\cdot \nabla \phi \Big)dx =\phi(y)
    \end{equation}

    We consider the test function $\phi(x)= G(y,x)\eta^2(x)$, and first get,
    \begin{multline}
        \int_{B(0,1)} |\nabla \G(y,x)|^2 \eta^2 dx=  -\int_{B(0,1)}\Big(2\eta \G(y,x)\nabla \G(y,x)\cdot \nabla \eta  +\G(y,x)\eta^2 \B\cdot\nabla \G(y,x) \\+2\G^2(y,x)\eta \B\cdot \nabla \eta)\Big) dx.
    \end{multline}

    Now by using the bound on the drift term $\B$, the Cauchy inequality with $\epsilon$'s, with small enough $\eps$, for the first two terms on the right,
    \begin{multline}
        \int_{B(0,1)} \G(y,x)\eta \nabla\G(y,x)\cdot \nabla \eta dx\leq \eps\int_{B(0,1)} \eta^2 |\nabla\G(x,y)|^2 dx +\frac{K^2}{\eps r^2}\int_{r/4 < |x-y|< 3r/2} \G(y,x)^2 dx,\\
        \int_{B(0,1)}\G(y,x)\eta^2\nabla\G(y,x)\cdot\B dx\leq  \eps\int_{B(0,1)} \eta^2 |\nabla\G(x,y)|^2 dx +\frac{\tilde{C}^2}{\eps \delta(y)^2}\int_{r/4< |x-y|<3r/2} \G(y,x)^2  dx.
    \end{multline}

    Here, the constant $K$ is due to the bound on the gradient of $\eta$ and the constant $\tilde{C}$ is due to the bound on the $|\B|$ term uniformly within the annulus where $r/2\leq |x-y|<3r/2$ .  Using the bounds on the cut-off function $\eta$ introduced above, and hiding the term with the square of the gradient of $\G(y,x)$, we get, 
   \begin{multline}\label{eq13}
        \int\limits_{r/2<|x-y|<r} |\nabla \G(y,x)|^{2} dx \leq \Big(K_1 \frac{1}{r^{2}}\cdot \int\limits_{r/4<|x-y|<3r/2} \G(y,x)^{2}dx\Big) +\Big(K_2 \frac{1}{r\delta(y)}\cdot\int\limits_{r/4<|x-y|<3r/2} \G(y,x)^{2} dx\Big) \\ +\Big(K_3 \frac{1}{\delta(y)^2}\cdot\int\limits_{r/4<|x-y|<3r/2} \G(y,x)^{2} dx\Big) 
    \end{multline}

    Here, the constants $K_1, K_2, K_3$ are obtained from $\tilde{C}, K, \epsilon,$ after hiding the term with the square of the gradient of $\G(y,x)$ above. Noting that $r\leq \frac{1}{2}\delta(y)$, we get with a further altered constant $\tilde{K}$, 
       \begin{align}\label{eq6}
        \int\limits_{r/2<|x-y|<r} |\nabla \G(y,x)|^{2} dx \leq \tilde{K} \frac{1}{r^{2}}\cdot \Big(\int\limits_{r/4<|x-y|<3r/2} \G(y,x)^{2}dx\Big)\leq \tilde{K}r^{n-2}\big(  \sup\limits_{r/4<|x-y|< 3r/2} \G(y,x)^{2} \big) .
    \end{align}
    Again as in \cite{GW82}, choose a similar cut-off function $\phi$ that is 1 on $B_{r/2}(y)$ and zero outside $B_{r}(y)$, and using it as the test function we get,
    \begin{multline}
     1= \int\limits_{r/2< |x-y|< r} (\partial_{i}\G(y,x) \partial_{i} \phi + \G(y,x) \B_i \partial_i \phi \Big) dx)\leq  \frac{K}{r} \int\limits_{r/2< |x-y|< r} |\nabla \G(y,x)|dx  \\+\frac{\tilde{C}}{r\delta(y)}\int\limits_{r/2< |x-y|< r} \G(y,x) dx.
    \end{multline}


    Using the identity of \cref{eq6}, and Cauchy's inequality for the first term on the right, along with a trivial volume bound, and finally Harnack's inequality, with some constants $c_0,c_1$ we finally get,
\begin{align}
     1\leq \frac{1}{c_1} r^{n-2} \sup\limits_{r/4< |x-y|< 3r/2} \G(y,x)
     \leq \frac{1}{c_0} |z-y|^{n-2}\G(y,z).
\end{align}

     \end{proof}

     Using Harnack inequality, and \cref{thm1}, we immediately get,
     \begin{corollary}\label{cor1}
         For the elliptic operator with the coefficients satisfying \cref{two} in $B(0,1)\subset \R^n$, we have the lower bound for the Green's function for the operator in \cref{one}: for any $z,y\in B(0,1)$ with $|y-z|\leq \frac{1}{2}\delta(z)=\frac{1}{2}\text{dist}(z,\partial B(0,1))$:
    \begin{equation}\label{lower}
        \G(y,z)\geq c_0\frac{1}{|z-y|^{n-2}}.
    \end{equation}
     \end{corollary}

     In other words, we can interchange the order of the arguments for the Green's function, in this result. 
     
\section{Counterexample for uniform upper bounds.}

\begin{proof}[Proof of Theorem 2] In this case, we consider the domain $B(0,1)$ and for all $m\geq 3$, the operator $L_m$ with the drift $\B_m$ as considered in \cref{two} in the introduction.

For the points on the $x$-axis, henceforth we simply write $a$ in place of the point $(a,0,\dots,0)$. We consider the above ODE of \cref{ode} along the $x$-axis.

Consider the pole of the Green's function to be at the origin. Since the Green's function $\G(x)=\G(x,0)$ in this case is dependent only on the radial coordinate, after writing the expression of \cref{one} in polar coordinates, noting that the generalized solution of \cref{thm1} is in this case also a classical solution to $L_m u=0$ in $B(0,1)\setminus \{0\}$, we will actually solve in $(0,1)\setminus \{0\}$ the equation,
\begin{align}\label{ode}
    \frac{d\G_{m}^{2}}{d r^2} +\frac{n-1}{r}\frac{d\G_m}{d r} +\B_m \cdot \hat{r}\frac{d\G_m}{d r} =0.
\end{align}

The level sets of the Green's function are the spheres with center at the origin, and $\frac{d\G}{dr}<0$ for all $0<r<1$. By definition, $\G(t)=0$ when $|t|=1$. In this construction, we will later use the fact that the lower bound from \cref{thm1} exists with point-wise bounds independent of $m$, in the subdomain $B(0,\frac{1}{4})$.

By using a standard integrating factor, we get from \cref{ode} that, 
\begin{align}
    (r^{n-1}\G_{m}^{'} e^{\int^{r}_1 \B_m dt})'=0,
\end{align}
where $(\cdot)'$ denotes the derivative with respect to $r$. We have,

\begin{align}
r^{n-1}\G_{m}'(r)e^{\int^{r}_{1}\B_m  dt}=a^{n-1}\G'_{m}(a)e^{\int^{a}_{1}\B_m  dt},
\end{align}
and thus, 
\begin{align}\label{eq13}
    \G'_{m}(r)=\frac{1}{r^{n-1}}a^{n-1}\G'_{m}(a)e^{-\int_{a}^{r} \B_m dt}
\end{align}
For a fixed $m$, we distinguish two cases, 
\begin{itemize}

\item $0<a< r\leq 1- \frac{1}{m}$.

In this case, the integral on the exponential gives us, 
\begin{align}
   -\int_{a}^{r} \B_m dt= \int_{a}^{r} \frac{C}{1-t} dt = C log(\frac{1-a}{1-r})
\end{align}
Thus, we have,
\begin{align}\label{eqone}
    \G'_{m}(r)=\frac{1}{r^{n-1}}a^{n-1}\G'_{m}(a)\Big(\frac{1-a}{1-r}\Big)^C.
\end{align}

\item $0<a<1-\frac{1}{m}<r<1$.
In this case, the integral on the exponential gives,
\begin{multline}\label{eqtwo}
    -\int_{a}^{r} \B_m dt= \int_{a}^{1- \frac{1}{m}} \frac{C}{1-t} dt  +  \int_{1-\frac{1}{m}}^{r} \frac{C}{1-(1- \frac{1}{m})} dt = \int_{a}^{1-\frac{1}{m}} \frac{C}{1-t} dt  +  (r -1+\frac{1}{m}) Cm\\ = C\log \big(m(1-a)\big) + Cm(r -1+\frac{1}{m})
\end{multline}
\end{itemize}

Note that $\G_m(1)=0$.

\bigskip
We note, using the maximum principle, that the Green's function is radially non-increasing, and thus the radial derivative is non positive. We now note the existence of some positive constant $C_0$, independent of $m$, so that there exists a decreasing sequence\footnote{Instead of a sequence, it is enough to just find a single point with the stated property.} $a_{k,m}\to 0, k\geq 0 ,$ and $ a_{0,m} <1/4$, so that for each $k\geq 0$
\begin{align}\label{lowerbound}
    \big|\frac{d \G_m}{d r}\Big|_{a_{k,m}}\big|\geq  \frac{C_0}{a_{k,m}^{n-1}}.
\end{align}
(Here, as usual, by $a_{k,m}$ we mean the point $(a_{k,m},0,\dots,0)$.)
If this is not true, and for any $\theta>0$, for all sufficiently small $a\leq a_{0,m}$, we have
\begin{align}\label{eqq42}
\big|\frac{d \G_m}{d r}\big|_{a}<\frac{\theta}{a^{n-1}},
\end{align} 
then we will get a contradiction to the lower bound coming from \cref{thm1}, since in that case by integrating \cref{eqq42}, we will get, noting again that $\frac{\partial \G}{\partial r}<0$,
\begin{align}\label{lowerr}
    \G_m(p,0)-\G_m(a_{i,m},0)\leq  \frac{\theta}{n-2} \Big( \frac{1}{p^{n-2}}-\frac{1}{a_{i}^{n-2}}\Big).
\end{align}

       Thus, when $\theta$ is sufficiently small compared to  $K$ we get a contradiction to \cref{lower} , as  \cref{lowerr} shows that the increase of the values of the Green's function is slow enough as $p\to 0$, 
\begin{align}
        \G_m(p,0)\leq \G_m(a_{i,m},0)+ \frac{\theta}{n-2} \Big( \frac{1}{p^{n-2}}-\frac{1}{a_{i}^{n-2}}\Big).    
\end{align}

       so there would have to exist some point $p\leq a_{0,m}$ so that $\G_m(p,0)<\frac{K(M)}{p^{n-2}}$, which is a contradiction to \cref{thm1}.

For simplicity, consider the special case of $C=1$.
For a fixed $m$, choosing any $a_{k,m}$, with the above property, we get, $|(a_{k,m})^{n-1}\G'_{m}(a_{k,m})|>C_0$, independent of $m$. Now integrating \cref{eq13}, from $1/2$ to $1$, we get, for any $m$, using \cref{eqtwo},
\begin{align}\label{imp}
   \G_m(1)- \G_m (\frac{1}{2})=-\G_m (\frac{1}{2})=(a^{n-1}_{k,m} \G'_m (a_{k,m})) \Bigg(\int_{\frac{1}{2}}^{1- \frac{1}{m}} \frac{1}{r^{n-1}} \Big(\frac{1-a_{k,m}}{1-r}\Big) dr + \int_{1- \frac{1}{m}}^{1} \frac{1}{r^{n-1}} e^{-\int_{a}^{r} \B_m dt} dr \Bigg)
\end{align}
So, we have, noting again that $\G'_{m}(a_{k,m})<0$,
\begin{align}
     \G_m(\frac{1}{2})\geq C_0 \Bigg(\int_{\frac{1}{2}}^{1- \frac{1}{m}} \frac{1}{r^{n-1}} \Big(\frac{1-a_{k,m}}{1-r}\Big) dr + \int_{1- \frac{1}{m}}^{1} \frac{1}{r^{n-1}} e^{-\int_{a}^{r} \B_m dt} dr \Bigg)\geq C_0  \Bigg(\int_{\frac{1}{2}}^{1- \frac{1}{m}} \frac{1}{2r^{n-1}} \Big(\frac{1}{1-r}\Big) dr  \Bigg).
\end{align}
The above inequality holds with the bound $C_0$ independent of $m$. As $m\to \infty$, clearly the right hand side goes to infinity. 

Thus, we can't get pointwise upper bounds for $\G_m$, independent of $m$. 
\end{proof}
\begin{remark}
    We note that if we took the constant $C<1$ in \cref{two}, then for $G_{m}(x,0)$, one does get a finite value for $G_m(\frac{1}{2})$. More generally, in \cite{Pat25a}, it is shown that for any not necessarily bounded domain $\Omega$, there exists a sufficiently small $\eps(\Omega)$ so that when the constant of the limiting drift of  \cref{imp2}, is taken as $M=\eps(\Omega)$, one gets pointwise upper bounds as expected, in balls bounded away from the pole of the Green's function, and the constants are uniform over the domain. \footnote{In fact, we show more generally the same result in \cite{Pat25a}, when the drift is 'small' on average in Whitney balls.}
\end{remark}
\begin{remark}
    This also immediately gives us that the solution for the Poisson-Dirichlet problem is not well defined in the limit as $m\to \infty$; consider the data $f=1_{B(\frac{1}{2}, \eps)}$, a ball of radius $\eps$ for a small enough $\eps$. Consider the problem $L_m u_m=f$ in $B(0,1)$, $u=0 \ \text{on}  \ \partial B$. In this case, using Harnack's inequality and Theorem 2, we have $u(0)=\int G_m (0,y)f(y)dy\to \infty$ as $m\to \infty$. Further, using Harnack's inequality, we also get that $u_m(t)\to \infty$ as $m\to \infty$ for any $t\in B(0,\frac{1}{4})$ , and thus one doesn't get any solution for the limiting drift, in $W^{1,2}(B(0,1))$.
\end{remark}
\begin{remark} From the symmetry of the problem, and the definition of the elliptic measure with pole at the origin, it is clear that for any fixed $m$, the elliptic measure is equivalent to the Lebesgue measure on the boundary, and thus still doubling. In other words, identifying the boundary with $\mathbb{S}^{n-1}$, if we consider any surface ball $\Delta(p,r)=B(p,r)\cap \mathbb{S}^{n-1}$ , for some $p\in \mathbb{S}^{n-1}$, and consider the boundary function 
\begin{align}
    f(x)=\begin{cases}
        1,& x\in B\\
        0,& x\in S^n \setminus B,
    \end{cases}
\end{align} 
then for each $m$, the elliptic measure $\omega^{0}_{L_m}(\Delta(p,r))$ corresponding to the operator $L_m$  with pole at the origin, gives us the relative surface measure of $\Delta(p,r)$ in $\mathbb{S}^{n-1}$.


\end{remark}

\section{Acknowledgment.}
The author acknowledges useful feedback from Steve Hofmann.

\bigskip

Ethical Approval : Not applicable.

Funding : Not applicable.

Availability of data and materials: Not applicable

\end{document}